\newcommand{\Z}{\mathbf{Z}}
\newcommand{\Ps}{\mathbf{P}}
\newcommand{\As}{\mathbf{A}}
\newcommand{\C}{\mathbf{C}}
\newcommand{\cC}{\mathcal{C}}
\newtheorem{theorem}{Theorem}[section]
\newtheorem{lemma}[theorem]{Lemma}
\newtheorem{proposition}[theorem]{Proposition}
\theoremstyle{definition}
\newtheorem{definition}[theorem]{Definition}
\newtheorem{notation}[theorem]{Notation}
\theoremstyle{remark} 
\newtheorem{remark}[theorem]{Remark}
\DeclareMathOperator{\prim}{prim}
\DeclareMathOperator{\Jac}{Jac}
\DeclareMathOperator{\lcm}{lcm}
\title{Curves with rational families of quasi-toric relations}
\author{Remke Kloosterman}
\address{Universit\`a degli Studi di Padova,
Dipartimento di Matematica,
Via Trieste 63,
35121 Padova, Italy}
\date{\today}
\begin{document}

\begin{abstract} 
We investigate which plane curves admit rational families of quasi-toric relations. This extends previous results of Takahashi and Tokunaga in the positive case and of the author in the negative case.
\end{abstract}
\maketitle
\section{introduction}\label{secInt}

Let $F_0,F_1,F_2\in \C[x_0,x_1,x_2]$ be  homogeneous polynomials, without a common factor. Let $p_0,p_1,p_2$ be three positive integers. A \emph{quasi-toric relation} of type $(p_0,p_1,p_2)$ of $(F_0,F_1,F_2)$ is a triple $(h_0,h_1,h_2)$ of nonzero homogeneous polynomials such that
\[  F_0 h_0^{p_0}+F_1h_1^{p_1}+F_2h_2^{p_2} =0.\]
There is an obvious equivalence relation on these structures, see Definition~\ref{defEqu}.

Examples of triples $(F_0,F_1,F_2)$ with infinitely many equivalence classes of quasi-toric relations can be found, e.g., if $(p_0,p_1,p_2)=(2,3,6)$ then the quasi-toric relations form an abelian group. If we pick $F_0=F_1=1; F_2=f^2+g^3$ for general forms $f,g$ of degree $3k, 2k$  then the group of $(2,3,6)$-quasi-toric relations contains $\Z^ 2$ as a subgroup.
However, these relations cannot be parametrised by a rational curve.

In \cite{TT} Takahashi ad Hiro-o Tokunaga constructed examples of triples $(F_0,F_1,F_2)$  for which there is a family of quasi-toric relations parametrised by a rational curve. In this case the type is  $(2,2,2n+1)$ with $n\in \{1,2,3\}$. On the other hand we argued in \cite{TorDec} that for the types of the shape $(n,m,\lcm(n,m))$, with $n\geq 2,m> 2$  no such families can exist.
The main aim of this paper is to describe when positive dimensional  families of quasi-toric relations exists and when not. 

Firstly we investigate for which choices of types $(p_0,p_1,p_2)$ and $(q_0,q_1,q_2)$ there exists  a one-to-one correspondence between quasi-toric relations of type $(p_0,p_1,p_2)$ and of type $(q_0,q_1,q_2)$. More precisely, given a triple $(p_0,p_1,p_2)$ of positive integers, set $g=\gcd(p_0,p_1,p_2)$. For $\{i,j,k\}=\{0,1,2\}$ define $q_i=g\gcd(p_i/g,p_jp_k/g^2)$.
  We call the type $(q_1,q_2,q_3)$ reduced. We show that there is a natural one-to-one correspondence between equivalence classes of quasi-toric relations of type $(p_0,p_1,p_2)$ and of type $(q_0,q_1,q_2)$, see Proposition~\ref{prpEqu}. Hence one may restrict to the reduced types. 
Quasi-toric relations of reduced type can be interpreted as certain special $\C(x_1,x_2)$-rational points on a curve $\mathcal{C}/\C(x_1,x_2)$, see Proposition~\ref{prpCur}. We will show in Proposition~\ref{prpInf} that if there is a positive dimensional family of quasi-toric relations then the curve $
\cC/\C(x_1,x_2)$ has genus 0. This happens precisely whenever  one of the $q_i$ equals 1, or all the $q_i$ equal 2. Hence our main result is
\begin{theorem} Let $F_0,F_1,F_2$ be three homogeneous polynomials.
Let $(p_0,p_1,p_2)$ be a reduced type, such that $p_0\leq p_1\leq p_2$. Then there exists a positive dimensional family of quasi-toric relations of $(p_0,p_1,p_2)$, parametrized by a rational variety if and only if one of the following occurs
\begin{enumerate}
\item $p_0=p_1=p_2=2$ and there is at least one quasi-toric relation or
\item $p_0=1$ and $p_1$ divides $\deg(F_1)-\deg(F_2)$.
\end{enumerate}
\end{theorem}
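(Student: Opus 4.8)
The plan is to prove the two implications separately: the ``only if'' direction is essentially Proposition~\ref{prpInf}, while the ``if'' direction is handled by writing down explicit families. Since the theorem already assumes $(p_0,p_1,p_2)$ reduced and ordered, the numbers $q_i$ of the introduction equal the $p_i$, so Proposition~\ref{prpInf} says that a positive dimensional family of quasi-toric relations can exist only if $\cC/\C(x_1,x_2)$ has genus $0$, that is, only if $p_0=1$ or $(p_0,p_1,p_2)=(2,2,2)$. In the case $(2,2,2)$ such a family is in particular nonempty, so at least one quasi-toric relation exists and we are in case~(1). In the case $p_0=1$, a short computation with $q_i=g\gcd(p_i/g,p_jp_k/g^2)$ shows that reducedness forces $p_1=p_2$; moreover, being homogeneous, nonzero and summing to zero, the three terms $F_0h_0$, $F_1h_1^{p_1}$, $F_2h_2^{p_1}$ of any relation have one common degree, so $\deg F_1+p_1\deg h_1=\deg F_2+p_1\deg h_2$ and hence $p_1\mid\deg F_1-\deg F_2$, putting us in case~(2). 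This proves ``only if''.

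For ``if'' in case~(1), fix a quasi-toric relation $h=(h_0,h_1,h_2)$ of type $(2,2,2)$, put $Q(T_0,T_1,T_2)=F_0T_0^2+F_1T_1^2+F_2T_2^2$, and let $B$ be the symmetric bilinear form with $B(T,T)=Q(T)$. For a triple $v=(v_0,v_1,v_2)$ of homogeneous polynomials let $P_v$ be the triple with $i$-th entry $Q(v)h_i-2B(h,v)v_i$. Expanding the quadratic form, $Q(P_v)=Q(v)^2Q(h)-4Q(v)B(h,v)^2+4B(h,v)^2Q(v)=0$ identically, since $Q(h)=0$; hence the entries of $P_v$ form a quasi-toric relation whenever they are all nonzero, which is the case for generic $v$. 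One must only choose the degrees $\deg v_i$ so that $\deg F_i+\deg h_i+\deg v_i$ is independent of $i$; as $2\deg h_i+\deg F_i$ is independent of $i$ (from the original relation), this is a solvable linear condition admitting nonnegative solutions of arbitrarily large total degree. The admissible $v$ then form a positive dimensional projective space, over which $P_v$ depends algebraically; since projectively $v\mapsto[P_v]$ is the chord parametrization of the smooth conic $\{Q=0\}$ over $\C(x_1,x_2)$ from the point $[h_0:h_1:h_2]$, one checks that distinct general $v$ yield inequivalent relations, so the family realizes infinitely many equivalence classes.

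For ``if'' in case~(2), again $p_1=p_2=:p$ and we assume $p\mid\deg F_1-\deg F_2$. Choose homogeneous polynomials $u,w$ with $\deg F_1+p\deg u=\deg F_2+p\deg w$ (possible by the divisibility hypothesis, after enlarging $\deg u,\deg w$ simultaneously if necessary), and set $h_1=F_0u$, $h_2=F_0w$, $h_0=-F_0^{\,p-1}(F_1u^p+F_2w^p)$. Then
\[ F_0h_0+F_1h_1^p+F_2h_2^p=F_0^{\,p}\bigl(-(F_1u^p+F_2w^p)+F_1u^p+F_2w^p\bigr)=0, \]
and the three summands share a common degree by the choice of $u,w$; thus for general such $(u,w)$ this is a quasi-toric relation. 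Fixing a general $u$ and letting $w$ run over the projective space $\Ps(\C[x_0,x_1,x_2]_{\deg w})$ gives a positive dimensional rational family; by Definition~\ref{defEqu}, two relations of type $(1,p,p)$ with the same middle term $h_1$ are equivalent only when their last terms $h_2$ are proportional, so distinct $w$ give inequivalent relations and infinitely many equivalence classes occur. (For $p=1$ this reduces to the fact that the syzygy module of the common-factor-free triple $F_0,F_1,F_2$ is infinite dimensional.)

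Apart from these elementary computations and the degree bookkeeping, the step that genuinely needs care is showing that the constructed families do not collapse to finitely many classes under the equivalence relation of Definition~\ref{defEqu} — which is why that relation is inspected explicitly in each case. The one substantial input, namely that genus $0$ of $\cC$ is necessary for a positive dimensional family to exist, is supplied by Proposition~\ref{prpInf}.
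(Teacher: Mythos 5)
Your proposal is correct and follows essentially the same route as the paper: the ``only if'' direction rests on the genus-zero necessity established in Proposition~\ref{prpInf}, and the ``if'' direction uses a parametrization of the conic for type $(2,2,2)$ and the explicit family $(-F_0^{p-1}(F_1u^p+F_2w^p),F_0u,F_0w)$ for type $(1,p,p)$, exactly as in the paper. Your version is somewhat more detailed --- you write out the reflection formula for the conic, do the degree bookkeeping, and verify inequivalence of the members of each family explicitly, and your sign in the $(1,p,p)$ family corrects a typo in the paper's formula --- but the decomposition and the key inputs are identical.
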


The relations found in \cite{TT} are of type $(2,2,2k+1)$ and the associated reduced type is $(2,2,1)$, hence this is consistent with our results. However, 
one should note that the relations  in \cite{TT} are of a special shape: If one starts with a quasi-toric relation of type $(2,2,1)$ and one applies the correspondence to obtain a relation of type $(2,2,2n+1)$ then for most equivalences classes one has that  every representative $(h_0,h_1,h_2)$ satisfies $\gcd(h_0,h_1,h_2)\neq 1$. 
The examples of Takahashi and Tokunaga do satisfy $\gcd(h_0,h_1,h_2)=1$. It is this latter fact which makes their construction very interesting.

One should remark that quasi-toric relations are useful to construct Zariski pairs. For this one uses the Alexander polynomial of the curve $F_0F_1F_2=0$. 
Most proofs of this connection between quasi-toric relations and the Alexander polynomial work under a technical assumption, which tend to imply that the curve $\cC/\C(x_1,x_2)$ has positive genus. In particular, it does not work in the case of families of quasi-toric relations constructed by Takahashi and Tokunaga.

\section{Quasi-toric relations}\label{secQua}
\begin{definition}\label{defQua}
Let $F_0,F_1,F_2\in \C[x_0,x_1,x_2]$ be  homogeneous polynomials. Let $p_0,p_1,p_2$ be three positive integers. A \emph{quasi-toric relation} of type $(p_0,p_1,p_2)$ of $(F_0,F_1,F_2)$ is a triple $(h_0,h_1,h_2)$ of nonzero homogeneous polynomials such that
\begin{equation}\label{eqnTorRel} F_0 h_0^{p_0}+F_1h_1^{p_1}+F_2h_2^{p_2} =0.\end{equation}
\end{definition}

\begin{definition}\label{defEqu}
Let $F_0,F_1,F_2\in \C[x_0,x_1,x_2]$ be  homogeneous polynomials. Let $p_0,p_1,p_2$ be three positive integers. Let $d=\lcm(p_0,p_1,p_2)$. For $i\in \{0,1,2\}$ let $w_i=d/p_i$.

Two {quasi-toric relations} $(g_0,g_1,g_2),(h_0,h_1,h_2)$ of type $(p_0,p_1,p_2)$ are \emph{equivalent} if there exists nonzero homogenous forms $u,v$ such that
\[ u^{w_i} g_i=v^{w_i} h_i\]
for $i=0,1,2$.
\end{definition}

 In \cite{CogLib} the authors defined an equivalence relation on the sextuple \[(h_0,h_1,h_2,F_0,F_1,F_2).\] For our purposes it turns out to be preferable to fix $F_0,F_1,F_2$.

\begin{lemma} \label{lemEquA}
Let $F_0,F_1,F_2\in \C[x_0,x_1,x_2]$ be  homogeneous polynomials. Let $p_0,p_1,p_2$ be three positive integers. Let $m$ be an integer such that $m\mid p_2$. 
Then the map $(h_0,h_1,h_2) \to (h_0,h_1,h_2^m)$ defines a map from equivalence class of quasi-toric relations of type $(p_0,p_1,p_2)$ to equivalence classes of quasi-toric relations of type $(p_0,p_1,p_2/m)$
\end{lemma}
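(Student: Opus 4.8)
The plan is to establish the statement in two steps: first that the naive assignment $(h_0,h_1,h_2)\mapsto(h_0,h_1,h_2^m)$ carries a quasi-toric relation of type $(p_0,p_1,p_2)$ to one of type $(p_0,p_1,p_2/m)$, and then that this assignment respects the two equivalence relations, hence descends to equivalence classes.

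The first step is immediate. If $h_2\in\C[x_0,x_1,x_2]$ is a nonzero homogeneous form then so is $h_2^m$, and since $(h_2^m)^{p_2/m}=h_2^{p_2}$, equation~\eqref{eqnTorRel} for $(h_0,h_1,h_2)$ of type $(p_0,p_1,p_2)$ is literally the same identity as equation~\eqref{eqnTorRel} for $(h_0,h_1,h_2^m)$ of type $(p_0,p_1,p_2/m)$.

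For the second step I would first fix notation: let $d=\lcm(p_0,p_1,p_2)$ with $w_i=d/p_i$ as in Definition~\ref{defEqu}, and let $d'=\lcm(p_0,p_1,p_2/m)$ with $w_0'=d'/p_0$, $w_1'=d'/p_1$, $w_2'=d'm/p_2$ the corresponding weights for the type $(p_0,p_1,p_2/m)$. Since $p_0$, $p_1$ and $p_2/m$ all divide $d$, we have $d'\mid d$; set $t=d/d'$, a positive integer. A direct check then gives $w_i=tw_i'$ for $i=0,1$ and $mw_2=tw_2'$. Now suppose $(g_0,g_1,g_2)$ and $(h_0,h_1,h_2)$ are equivalent of type $(p_0,p_1,p_2)$, witnessed by nonzero forms $u,v$ with $u^{w_i}g_i=v^{w_i}h_i$ for $i=0,1,2$. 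I claim $u^t$ and $v^t$ witness the equivalence of $(g_0,g_1,g_2^m)$ and $(h_0,h_1,h_2^m)$: for $i=0,1$ the desired identity $(u^t)^{w_i'}g_i=(v^t)^{w_i'}h_i$ is exactly the hypothesis, because $tw_i'=w_i$; for $i=2$ one raises $u^{w_2}g_2=v^{w_2}h_2$ to the $m$-th power and rewrites the exponents using $mw_2=tw_2'$.

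I do not anticipate a genuine obstacle here; the only point requiring care is the index bookkeeping relating $(d,w_i)$ to $(d',w_i')$, in particular verifying $d'\mid d$ and the identities $w_i=tw_i'$, $mw_2=tw_2'$. One should also note that the argument goes through because one may freely raise a relation $u^{w_2}g_2=v^{w_2}h_2$ to an integer power, whereas one cannot divide exponents — this is why the correct witnesses are $u^t,v^t$ rather than any kind of root of $u$ or $v$.
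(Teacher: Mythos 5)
Your proposal is correct and complete; the paper itself dismisses this lemma with the single word ``Immediate,'' so your argument simply supplies the routine verification the author omitted. The two steps you give (the identity $(h_2^m)^{p_2/m}=h_2^{p_2}$ for the relation itself, and the bookkeeping $d'\mid d$, $w_i=tw_i'$, $mw_2=tw_2'$ showing that $u^t,v^t$ witness equivalence of the images) are exactly what is needed and contain no gaps.
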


\begin{proof} Immediate.
%There is an obvious map from the $(p_0,p_1,p_2)$ quasi-toric relations to the $(p_0,p_1,q_2)$ quasi-toric relations:
%Suppose $(h_0,h_1,h_2)$ is a $(p_0,p_1,p_2)$ quasi-toric relation then $(h_0,h_1,h_2^{m})$ is a $(p_0,p_1,q_2)$ quasi-toric relation and it is obvious that this maps equivalence classes to equivalence classes.

%Moreover if $(w_0,w_1,w_2)$ are the weights for $(p_0,p_1,p_2)$ then $(w_0/m,w_1/m,w_2)$ are the weights for $(p_0,p_1,q_2)$ and hence equivalence classes are mapped to equivalence classes.
%
%
%
%Let $d=uv_{01}v_{02}v_{12}k_0k_1k_2$.
%Set $w_0=d/p_0=v_{12}k_1k_2$ $w_1=d/p_1=v_{02}k_0k_2$, $w_2=d/p_2=v_{01}k_0k_1$.
%
%Another element in the equivalence class of $(h_0,h_1,h_2)$ is of the form $(f^{w_0}h_0,f^{w_1} h_1,f^{w_2}h_2)$, with $f$ a quotient of two homogeneous forms. This relation is mapped to $(f^{w_0}h_0,f^{w_1} h_1,f^{mw_2}h_2^m)$. However, for these structures of type $(p_0,p_1,p_2)$
%$d=uv_{01}v_{02}v_{12}k_0k_1 k_2/m$ and weights $(w_0/m, w_1/m, w_2)$, hence the above relation is equivalent with $(h_0,h_1,h_2^m)$.
\end{proof}

\begin{notation}\label{notExp} For three positive integers $(p_0,p_1,p_2)$ set 
\[ r=\gcd(p_0,p_1,p_2), d=\lcm(p_0,p_1,p_2),  s_{ij}=\gcd(p_i/r,p_j/r),\mbox{ and }t_i=p_i/(rs_{ij}s_{ik}),\] where $\{i,j,k\}=\{0,1,2\}$.
Then 
 \[p_0=rs_{01}s_{02}t_0, p_1=rs_{01}s_{12}t_1 p_2=rs_{02}s_{12}t_2 \mbox{ and } d=rs_{01}s_{02}s_{12}t_0t_1t_2. \]
Moreover, let
\[w_i:=\frac{d}{p_i}=s_{jk}t_jt_k\]
with $\{i,j,k\}=\{0,1,2\}$.
\end{notation}

\begin{lemma}\label{lemEqB}
Let $F_0,F_1,F_2\in \C[x_0,x_1,x_2]$ be  homogeneous polynomials. Let $p_0,p_1,p_2$ be three positive integers. Let $m$ be an integer such that $m\nmid p_0p_1$ and $m\mid p_2$. Then $\gcd(s_{01}t_0t_1,m)=1$.
Let $k$ be an integer such that $k(s_{01}t_0t_1) \equiv -1 \bmod m$.

Then the map \[(h_0,h_1,h_2) \to (h_2^{ks_{12}t_1 t_2/m } h_0,h_2^{ks_{02}t_0t_2/m} h_1,h_2^{(1+ks_{01}t_0t_1)/m})\]
 defines a map from equivalence class of quasi-toric relations of type $(p_0,p_1,p_2/m)$ to equivalence class of quasi-toric relations of type $(p_0,p_1,p_2)$
\end{lemma}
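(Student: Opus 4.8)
The statement breaks into two parts: the numerical assertion $\gcd(s_{01}t_0t_1,m)=1$, and the construction of the map together with its compatibility with the equivalence relation. Throughout I would lean on the explicit factorisations in Notation~\ref{notExp}; the key reformulation is that $s_{01}t_0t_1=w_2=d/p_2=\lcm(p_0,p_1,p_2)/p_2$, so the numerical claim says precisely that $\lcm(p_0,p_1,p_2)/p_2$ is prime to $m$, and the exponents appearing in the map are $k\,w_0/m$, $k\,w_1/m$ and $(1+k\,w_2)/m$ with $w_i=d/p_i$.

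For the numerical claim I would argue one prime at a time. Fix a prime $\ell\mid m$ and set $\alpha=v_\ell(p_0)$, $\beta=v_\ell(p_1)$, $\gamma=v_\ell(p_2)$. From $m\mid p_2$ one has $v_\ell(m)\le\gamma$, and the role of the hypothesis $m\nmid p_0p_1$ is to force $\gamma\ge\max(\alpha,\beta)$ at every such $\ell$; then $v_\ell\bigl(\lcm(p_0,p_1,p_2)/p_2\bigr)=\max(\alpha,\beta,\gamma)-\gamma=0$, so $\ell\nmid s_{01}t_0t_1$, whence $\gcd(s_{01}t_0t_1,m)=1$. In particular $k(s_{01}t_0t_1)\equiv-1\pmod m$ is solvable, and any solution $k$ satisfies $\gcd(k,m)=1$ (a prime dividing both would divide $k s_{01}t_0t_1\equiv-1$). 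I expect this to be the step requiring the most care: one must check that the two divisibility hypotheses on $m$ control the $\ell$-adic valuations of all of $r$, the $s_{ij}$ and the $t_i$ at once, i.e.\ that $m$ is supported only at primes at which $p_2$ dominates both $p_0$ and $p_1$.

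For the construction I would write $a_0=kw_0/m$, $a_1=kw_1/m$, $a_2=(1+kw_2)/m$, so that the map sends $(g_0,g_1,g_2)$ to $(h_0,h_1,h_2)=(g_2^{a_0}g_0,\ g_2^{a_1}g_1,\ g_2^{a_2})$. First one checks $a_0,a_1,a_2\in\Z_{\ge0}$: nonnegativity by choosing a nonnegative representative of $k$ modulo $m$, integrality of $a_2$ by the defining congruence, and integrality of $a_0,a_1$ by reducing — using $\gcd(k,m)=1$ — to $m\mid w_0$ and $m\mid w_1$, which is again where the hypotheses on $m$ are used. Granting this, that $(h_0,h_1,h_2)$ is a quasi-toric relation of type $(p_0,p_1,p_2)$ is a one-line computation: the $g_2$-exponent of $F_ih_i^{p_i}$ is $a_ip_i$, and since $w_ip_i=d$ one gets $a_0p_0=a_1p_1=kd/m$ and $a_2p_2=p_2/m+kd/m$, so
\[ F_0h_0^{p_0}+F_1h_1^{p_1}+F_2h_2^{p_2}=g_2^{kd/m}\bigl(F_0g_0^{p_0}+F_1g_1^{p_1}+F_2g_2^{p_2/m}\bigr)=0,\]
the last equality because $(g_0,g_1,g_2)$ is a quasi-toric relation of type $(p_0,p_1,p_2/m)$ and $kd/m\in\Z_{\ge0}$ as $m\mid p_2\mid d$.

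Finally, to see that the map descends to equivalence classes I would start from an equivalence $u^{w_i'}g_i=v^{w_i'}g_i'$ of type $(p_0,p_1,p_2/m)$, with $w_i'$ the weights attached to $(p_0,p_1,p_2/m)$, and produce homogeneous forms $\tilde u,\tilde v$ realising the equivalence of the images for the weights $w_i$ attached to $(p_0,p_1,p_2)$; the natural candidates are monomials in $u$, $v$ and the two copies of $g_2$, and what is needed is a compatibility between the two weight vectors and the shift exponents $a_i$. I expect this last step to be routine but bookkeeping-heavy, so that the real content of the lemma is the arithmetic of the two previous paragraphs — pinning down exactly how $m\mid p_2$ and $m\nmid p_0p_1$ constrain the prime factorisation of $m$ relative to $r$, the $s_{ij}$ and the $t_i$.
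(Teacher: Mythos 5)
Your construction of the map is exactly the paper's: starting from a relation of type $(p_0,p_1,p_2/m)$, multiply by $g_2^{kd/m}$ and distribute the exponents; the identity $F_0h_0^{p_0}+F_1h_1^{p_1}+F_2h_2^{p_2}=g_2^{kd/m}\bigl(F_0g_0^{p_0}+F_1g_1^{p_1}+F_2g_2^{p_2/m}\bigr)$ is the same computation the paper performs. Like the paper, you leave the descent to equivalence classes as routine; note that the paper does carry out one verification you omit, namely that replacing $k$ by $k+vm$ shifts the exponent vector by $v(w_0,w_1,w_2)$ and hence gives an equivalent relation, so the map does not depend on the choice of $k$.

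The genuine gap is in your numerical step. You claim the hypothesis on $m$ forces $\gamma=v_\ell(p_2)\ge\max(\alpha,\beta)$ with $\alpha=v_\ell(p_0)$, $\beta=v_\ell(p_1)$ at every prime $\ell\mid m$, and you deduce $\gcd(w_2,m)=1$ from this. But this condition is not strong enough for what your own later argument requires: integrality of $a_0=kw_0/m$ and $a_1=kw_1/m$ needs $m\mid w_0$ and $m\mid w_1$, i.e.\ $v_\ell(m)\le\gamma-\alpha$ and $v_\ell(m)\le\gamma-\beta$, and this does not follow from $v_\ell(m)\le\gamma$ together with $\gamma\ge\max(\alpha,\beta)$. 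Concretely, take $(p_0,p_1,p_2)=(2,1,4)$ and $m=4$: then $m\mid p_2$, $m\nmid p_0p_1$, your valuation condition holds at $\ell=2$, and $\gcd(s_{01}t_0t_1,m)=\gcd(1,4)=1$; yet $w_0=2$ and $k\equiv 3\bmod 4$, so $a_0=kw_0/m=3/2$ is not an integer and the displayed map does not exist. What is actually needed --- and what the paper invokes when it asserts ``$m$ divides $t_2$'' --- is that no prime of $m$ divides $p_0$ or $p_1$, i.e.\ $\gcd(m,p_0p_1)=1$, so that $\alpha=\beta=0$ at every $\ell\mid m$; then $v_\ell(r)=v_\ell(s_{02})=v_\ell(s_{12})=v_\ell(s_{01}t_0t_1)=0$ and $v_\ell(t_2)=v_\ell(w_0)=v_\ell(w_1)=\gamma\ge v_\ell(m)$, which gives all three facts ($\gcd(s_{01}t_0t_1,m)=1$, $m\mid w_0$, $m\mid w_1$) at once. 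The example above shows the lemma is false under the literal non-divisibility reading of ``$m\nmid p_0p_1$'', so the hypothesis must be read as coprimality; once you replace your claim ``$\gamma\ge\max(\alpha,\beta)$'' by ``$\alpha=\beta=0$'', the rest of your argument goes through and coincides with the paper's.
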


\begin{proof}
Take a quasi-toric relation $(h_0,h_1,h_2)$ of type $(p_0,p_1,p_2/m)$ then
\[ h_0^{p_0}F_0+h_1^{p_1}F_1+h_2^{p_2/m} F_2=0.\]
Multiply this equation with $h_2^{kd/m}$ then we find the following quasi-toric relation of type $(p_0,p_1,p_2)$
\[(h_2^{kd/(mp_0)} h_0, h_2^{kd/(mp_1)}h_1, h_2^{(1+(kd)/p_2)/m}) ,\]
provided that  the exponent of $h_2$ in each of the entries is an integer.
Recall that $kd/(mp_0)=k s_{12}t_1t_2/m$, $kd/(mp_1)=k s_{02}t_0t_2/m$ and $kd/p_2=ks_{01}t_0t_1$.  Since $m$ divides $t_2$ and  we choose $k$ such that $1+ks_{01}t_0t_1$ is divisible by $m$ we obtain that the exponents of $h_2$ are indeed integers. %Hence our map sends quasi-toric relation of type $(p_0,p_1p_2/m)$ to quasi-toric relations of type $(p_0,p_1,p_2)$.

The integer $k$ is unique modulo $m$. If $k_1=k+vm$ then the exponents of $h_2$ differ by $(v(s_{12}t_1t_2),v(s_{02}t_0t_2),v(s_{01}t_0t_1))=v(w_0,w_1,w_2)$, hence the obtained quasi-toric relation is in the same equivalence class.
Similarly, one easily checks that an equivalent quasi-toric relation of $(h_0,h_1,h_2)$ is mapped to an element of the same equivalence class.
\end{proof}
%\begin{proof}
%Let $u=\gcd(p_0,p_1,p_2)$, $v_{ij}=\gcd(p_i/u,p_j/u)$ and take $k_i$ such that  \[p_0=uv_{01}v_{02}k_0, p_1=uv_{01}v_{12}k_1 p_2=uv_{02}v_{12}k_2.\]
%Then $m$ is a divisor of $k_2$. Let $m_2=k_2/m$.%From a transitivity argument it  suffices to prove the result for $m=k_2$. 
%Set $q_2=p_2/m$.%=uv_{02}v_{12}$.
%Then $d=uv_{01}v_{02}v_{12}k_0k_1k_2$.
%Set $w_0=v_{12}k_1k_2$ $w_1=v_{02}k_0k_2$, $w_2=v_{01}k_0k_1$.

%Suppose now $g_0,g_1,g_2$ is $(p_0,p_1,q_2)$ quasi-toric relation of $F_0,F_1,F_2$
%For any positive integer $t$, let $r=uv_{01}v_{02}v_{12}k_0k_1m_2 t$.

%Then also $g_0g_2^{r/p_0}, g_0g_2^{r/p_1}, g_2^{1+r/q_2}$ is a $p_0,p_1,q_2$ quasi-toric relation. Recall that $r/q_2=v_{01}k_0k_1t\in \Z$.

%We want to determine the values of $t$ for which this gives rise to a relation of type $(p_0,p_1,p_2)$ i.e., when $k_2$ divides $1+r/q_2$. This happens precisely when $t(v_{01}k_0k_1)\equiv -1 \bmod k_2$. By construction $\gcd(v_{01},k_2)=\gcd(k_0,k_2)=\gcd(k_1,k_2)=1$. Hence there exist infinitely many integers $t$ satisfying the above congruences. 
%
%\end{proof}

%Then there is a one-to-one correspondence between equivalence classes of quasi-toric relations of type $(p_0,p_1,p_2)$ and of type $(p_0,p_1,p_2/m)$.

\begin{proposition}\label{prpEqu}
Let $F_0,F_1,F_2\in \C[x_0,x_1,x_2]$ be  homogeneous polynomials. Let $p_0,p_1,p_2$ be three positive integers. Let $m$ be an integer such that $m\nmid p_0,p_1$ and $m\mid p_2$. Then there is a one-to-one correspondence between equivalence classes of quasi-toric relations of type $(p_0,p_1,p_2)$ and of type $(p_0,p_1,p_2/m)$.
\end{proposition}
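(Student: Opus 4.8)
The plan is to show that the maps constructed in Lemmas~\ref{lemEquA} and \ref{lemEqB} are mutually inverse, which immediately gives the desired bijection. Lemma~\ref{lemEquA} (applied with the integer $m$ dividing $p_2$) gives a well-defined map $\Phi$ from equivalence classes of type $(p_0,p_1,p_2)$ to equivalence classes of type $(p_0,p_1,p_2/m)$, namely $(h_0,h_1,h_2)\mapsto(h_0,h_1,h_2^m)$. Lemma~\ref{lemEqB}, which applies precisely because $m\nmid p_0p_1$ and $m\mid p_2$, gives a well-defined map $\Psi$ in the opposite direction. So the only thing left to check is $\Phi\circ\Psi=\mathrm{id}$ and $\Psi\circ\Phi=\mathrm{id}$ on equivalence classes.

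First I would compute $\Phi\circ\Psi$. Starting from a relation $(h_0,h_1,h_2)$ of type $(p_0,p_1,p_2/m)$, the map $\Psi$ produces the triple $(h_2^{ks_{12}t_1t_2/m}h_0,\, h_2^{ks_{02}t_0t_2/m}h_1,\, h_2^{(1+ks_{01}t_0t_1)/m})$, and then $\Phi$ raises the last entry to the $m$-th power, yielding $(h_2^{ks_{12}t_1t_2/m}h_0,\, h_2^{ks_{02}t_0t_2/m}h_1,\, h_2^{1+ks_{01}t_0t_1})$. I then observe that this triple is obtained from $(h_0,h_1,h_2)$ by multiplying through by $h_2^{kd/m}$ as in the proof of Lemma~\ref{lemEqB} (using $kd/(mp_i)=ks_{jk}t_jt_k/m$ and $kd/p_2=ks_{01}t_0t_1$), so it equals $(h_0,h_1,h_2)$ after rescaling the $i$-th entry by the $w_i$-th power of the common factor $h_2^{kd/d}$; more directly, with $u=h_2^{k/(\text{appropriate})}$ one sees the two triples differ by $(u^{w_0},u^{w_1},u^{w_2})$, hence lie in the same equivalence class. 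For $\Psi\circ\Phi$, I would start from $(h_0,h_1,h_2)$ of type $(p_0,p_1,p_2)$, apply $\Phi$ to get $(h_0,h_1,h_2^m)$, and then apply $\Psi$, which replaces the last entry's argument $h_2^m$ everywhere; the exponents become $km s_{12}t_1t_2/m=ks_{12}t_1t_2$ on the first entry, etc., and $(1+kms_{01}t_0t_1/m)=1+ks_{01}t_0t_1$ is replaced by $(h_2^m)^{(1+ks_{01}t_0t_1)/m}=h_2^{1+ks_{01}t_0t_1}$, so again one recognises the result as $(h_0,h_1,h_2)$ multiplied through by $h_2^{ks_{01}t_0t_1}$ in a weighted fashion, i.e. equivalent to $(h_0,h_1,h_2)$.

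The only genuine obstacle is bookkeeping: one must keep straight the exponents $w_i=s_{jk}t_jt_k$ and verify that in each composition the discrepancy between the output triple and the input triple is exactly of the form $(u^{w_0}g_0,u^{w_1}g_1,u^{w_2}g_2)=(v^{w_0}h_0,v^{w_1}h_1,v^{w_2}h_2)$ for suitable monomials $u,v$ in $h_2$, matching Definition~\ref{defEqu}. The divisibility facts needed (that $m\mid t_2$, that $\gcd(s_{01}t_0t_1,m)=1$ so $k$ exists) are already supplied by Notation~\ref{notExp} and Lemma~\ref{lemEqB}, and the well-definedness on equivalence classes is already part of those two lemmas, so nothing new is required there. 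I expect the proof to be short: essentially a one-paragraph verification that the round trips multiply the original triple by a weighted power of $h_2^{kd/m}$ (respectively $h_2^{kd/m}$ with $h_2$ read as the original variable), hence land in the same class.
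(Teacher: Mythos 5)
Your proposal is correct and follows essentially the same route as the paper: both proofs verify that the maps of Lemmas~\ref{lemEquA} and~\ref{lemEqB} are mutually inverse by tracking the exponents of $h_2$ (resp.\ $g_2$) through the two compositions and absorbing the resulting discrepancy $(kw_0,kw_1,1+kw_2)$ (with the weights of the appropriate type) into the equivalence relation via $u=1$, $v=h_2^{k}$. The remaining work is exactly the bookkeeping you describe, so nothing essential is missing.
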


\begin{proof} 
It suffices to prove the maps constructed in the previous two lemmata are inverse maps to each other.

If we start with a relation  $(h_0,h_1,h_2)$ of type $(p_0,p_1,p_2)$  and we apply both maps then we obtain that the $(p_0,p_1,p_2)$ quasi-toric relation $(h_0,h_1,h_2)$ is mapped to
\[\left(h_0h_2^{rs_{12}t_1t_2}, h_1h_2^{rs_{02}t_0t_2},h_2^{k_2(1+ks_{01}t_0t_1)/k_2}\right).\]
The exponents of $ h_2$ at each of the components are $tw_0, tw_1, 1+tw_2$, hence we may apply the equivalence relation with $u=1,v=h_2^t$ and obtain that the above relation is equivalent with\[ (h_0,h_1,h_2).\]

Similarly for a relation  $(g_0,g_1,g_2)$ of type $(p_0,p_1,p_2/m)$  we obtain that this relation is mapped to
\[ (g_0g_2^{krs_{12}t_1t_2/m}, g_1g_2^{krs_{02}t_0t_2/m},g_2^{(1+krs_{01}t_0t_1)}).\]
For this type the weights for the equivalence relation are \[(s_{12}t_1t_2/m,s_{02}t_0t_2/m, s_{01}t_0t_1).\] By taking $u=1,v=g_2^{kr}$ in the definition of the equivalence relation we obtain the above relation is equivalent with
\[ (g_0,g_1,g_2).\]
\end{proof}

\begin{proposition}\label{prpCur} Suppose $(p_0,p_1,p_2)$ is a reduced type. Let $r=\gcd(p_0,p_1,p_2)$ and let $s_{01}=\gcd(p_0/r,p_1/r)$. Then there is a map from the set of  quasi-toric relations of type $(p_0,p_1,p_2)$ to the set of $\C(x_1,x_2)$-rational points of the curve
\[ \frac{F_0(1,x_1,x_2)}{F_2(1,x_1,x_2)} z_1^{p_0/s_{01}}+\frac{F_1(1,x_1,x_2)}{F_2(1,x_1,x_2)} z_2^{p_1/s_{01}}+1=0\]
such that each non empty fiber consists of precisely $s$ points.
 
If $p_2=\lcm(p_0,p_1)$, i.e., if $s_{01}=1$ then this map is also surjective and therefore bijective.
\end{proposition}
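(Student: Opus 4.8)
The plan is to exhibit the map by an explicit formula, check that it lands on $\cC$ and factors through the equivalence relation, and then compute its fibres; surjectivity in the case $s_{01}=1$ will come out of the fibre count together with a denominator--clearing argument. I would begin by recording what reducedness buys us: in the notation of Notation~\ref{notExp}, the equalities $q_i=p_i$ force $t_0=t_1=t_2=1$ (a short $\gcd$ computation, using that the $t_i$ and $s_{ij}$ are suitably coprime), so that $p_0=rs_{01}s_{02}$, $p_1=rs_{01}s_{12}$, $p_2=rs_{02}s_{12}$ with $s_{01},s_{02},s_{12}$ pairwise coprime. In particular $s_{01}\mid p_0$, $s_{01}\mid p_1$, both $p_0/s_{01}=rs_{02}$ and $p_1/s_{01}=rs_{12}$ divide $p_2$, with $p_2=s_{12}(p_0/s_{01})=s_{02}(p_1/s_{01})$, and $w_0=s_{12}$, $w_1=s_{02}$, $w_2=s_{01}$. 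I will also use that $G\mapsto G(1,x_1,x_2)$ sends a nonzero homogeneous polynomial to a nonzero element of $\C[x_1,x_2]$, so that all the fractions below make sense in $\C(x_1,x_2)^\times$.

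To a quasi-toric relation $(h_0,h_1,h_2)$ I attach the pair
\[ z_1=\frac{h_0(1,x_1,x_2)^{s_{01}}}{h_2(1,x_1,x_2)^{s_{12}}},\qquad z_2=\frac{h_1(1,x_1,x_2)^{s_{01}}}{h_2(1,x_1,x_2)^{s_{02}}}.\]
Since $z_1^{p_0/s_{01}}=h_0^{p_0}/h_2^{p_2}$ and $z_2^{p_1/s_{01}}=h_1^{p_1}/h_2^{p_2}$ in $\C(x_1,x_2)$, multiplying the equation of $\cC$ through by $F_2(1,x_1,x_2)$ turns it into the dehomogenisation of \eqref{eqnTorRel} divided by $h_2^{p_2}$; hence $(z_1,z_2)$ lies on $\cC$. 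If $(g_0,g_1,g_2)$ is equivalent to $(h_0,h_1,h_2)$, say $g_i=(v/u)^{w_i}h_i$, then because $w_0=s_{12}$ and $w_2=s_{01}$ the factor $(v/u)^{s_{01}s_{12}}$ cancels out of $z_1$, and likewise (using $w_1=s_{02}$) out of $z_2$; so the map factors through equivalence classes.

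For the fibres, suppose $(g_0,g_1,g_2)$ and $(h_0,h_1,h_2)$ have the same image, and put $\rho=g_0/h_0$, $\sigma=g_2/h_2$, $\tau=g_1/h_1$ in $\C(x_1,x_2)^\times$; the hypothesis reads $\rho^{s_{01}}=\sigma^{s_{12}}$ and $\tau^{s_{01}}=\sigma^{s_{02}}$. Since $\gcd(s_{01},s_{12})=1$, there is a unique $\mu\in\C(x_1,x_2)^\times$ with $\rho=\mu^{s_{12}}$ and $\sigma=\mu^{s_{01}}$ (take $\mu=\rho^b\sigma^a$ with $as_{01}+bs_{12}=1$), and then $\tau=\omega\mu^{s_{02}}$ for some $s_{01}$-th root of unity $\omega$ (a root of unity in $\C(x_1,x_2)$ lies in $\C$). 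Writing $\mu$ as a ratio of polynomials and homogenising, padding with powers of $x_0$ (legitimate because both triples are quasi-toric relations of the same $(F_0,F_1,F_2)$, so the relevant degrees match), this says exactly that $(g_0,g_1,g_2)$ is equivalent to $(h_0,\omega h_1,h_2)$. Conversely, for every $s_{01}$-th root of unity $\omega$ the triple $(h_0,\omega h_1,h_2)$ is again a quasi-toric relation (since $\omega^{p_1}=1$) with the same image, and $(h_0,\omega h_1,h_2)$ and $(h_0,\omega'h_1,h_2)$ are equivalent only when $\omega=\omega'$ (an equivalence forces $(v/u)^{s_{12}}=(v/u)^{s_{01}}=1$, so $v/u$ is a root of unity of order dividing $\gcd(s_{01},s_{12})=1$, i.e.\ $v/u=1$, and then the middle coordinate gives $\omega=\omega'$). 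Hence a nonempty fibre consists of exactly the $s_{01}$ classes of the $(h_0,\omega h_1,h_2)$ --- this is the integer ``$s$'' of the statement.

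Finally, if $s_{01}=1$, given a point $(z_1,z_2)$ of $\cC$ with $z_1z_2\ne0$ I would write $z_i=A_i/B_i$ with $A_i,B_i\in\C[x_1,x_2]$, set $h_2=B_1B_2$, $h_0=z_1h_2^{s_{12}}$, $h_1=z_2h_2^{s_{02}}$ --- nonzero polynomials with $\sum_iF_i(1,x_1,x_2)h_i^{p_i}=0$ --- and homogenise them compatibly (possible as soon as a quasi-toric relation of this type exists at all), obtaining a quasi-toric relation with image $(z_1,z_2)$. Hence the map is surjective, the points to watch being this compatible homogenisation and the locus $z_1z_2=0$ on $\cC$ (which supports no quasi-toric relation, and is empty unless some $-F_i(1,x_1,x_2)/F_j(1,x_1,x_2)$ is a perfect power in $\C(x_1,x_2)$); together with the fibre count --- fibres of size $1$ --- this gives bijectivity. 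The step I expect to be most delicate is the fibre computation, i.e.\ keeping the root-of-unity bookkeeping straight (including the passage from $\mu\in\C(x_1,x_2)^\times$ back to homogeneous $u,v$), with the preliminary $\gcd$ analysis of reduced types a close second, since the very construction of the map presupposes that $p_0/s_{01}$, $p_1/s_{01}$, $s_{12}$ and $s_{02}$ are integers with the stated divisibilities.
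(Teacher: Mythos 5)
Your proof is correct and follows essentially the same route as the paper's: the same map $(h_0,h_1,h_2)\mapsto\left(h_0^{s_{01}}/h_2^{s_{12}},\,h_1^{s_{01}}/h_2^{s_{02}}\right)$, the same root-of-unity analysis of the fibres yielding $s=s_{01}$, and the same denominator-clearing argument for surjectivity when $s_{01}=1$. You are in fact somewhat more explicit than the paper about the points its proof glosses over --- that reducedness forces $t_0=t_1=t_2=1$, the homogenisation and degree bookkeeping when passing between $\C(x_1,x_2)$ and forms, and the locus $z_1z_2=0$ --- and you handle all of these correctly.
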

\begin{proof} %Since the type is reduced we can write\[ p_0=r s_{01} s_{02} ,p_1=rs_{01}{s_{12}}, p_2=rs_{02}{s_{12}}.\]
%Moreover we may assume  that $v\geq t\geq s$, in order to have $p_0\leq p_1\leq p_2$.
%
Take a  quasi-toric relation $(h_0,h_1,h_2)$. Then we can rewrite \ref{eqnTorRel}
\[ \left(  \frac{h_0^{s_{01}}}{h_2^{s_{12}}}  \right)^{r{s_{02}}} F_0+ \left(  \frac{h_1^{s_{01}}}{h_2^t } \right)^{r{s_{12}}} F_1+F_2=0.\]
I..e., this relation yields a $\C(x_1,x_2)$-point
\[ \left( \frac{h_0^{s_{01}}}{h_2^{s_{12}}},\frac{h_1^{s_{01}}}{h_2^{s_{02}}}\right)\]
 on the affine curve with equation
\[ \frac{F_0(1,x_1,x_2)}{F_2(1,x_1,x_2)} z_1^{r{s_{02}}}+\frac{F_1(1,x_1,x_2)}{F_2(1,x_1,x_2)} z_2^{r{s_{12}}}+1=0\]
One easily checks that an equivalent relation is mapped to the same point.

Suppose now that $(g_0,g_1,g_2)$ yields the same point on $\cC$ as $(h_0,h_1,h_2),$ i.e.,  
\[\frac{g_0^{s_{01}}}{g_2^{s_{12}}}=\frac{h_0^{s_{01}}}{h_2^{s_{12}}}\mbox{ and }\frac{g_1^{s_{01}}}{g_2^{s_{02}}}=\frac{h_1^{s_{01}}}{h_2^{s_{02}}}\]
hold. The above equations can be rewritten as
\[ \left(\frac{g_0}{h_0}\right)^{s_{01}}=\left(\frac{g_2}{h_2}\right)^{s_{12}} \mbox{ and }\left(\frac{g_1}{h_1}\right)^{s_{01}}=\left(\frac{g_2}{h_2}\right)^{s_{02}}\]
Recall that $\gcd(s_{01},{s_{02}})=\gcd(s_{01},{s_{12}})=\gcd({s_{02}},{s_{12}})=1$.  In particular the first equation shows that $g_2/h_2$ is a $s_{01}$-th power.
I.e., there exists forms $f_1,f_2$ without a common factor such that $g_2/h_2=f_1^{s_{01}}/f_2^{s_{01}}$. From this it follows that
\[ \left(\frac{g_0}{h_0}\right)^{s_{01}}=\left(\frac{f_1}{f_2}\right)^{s_{01}{s_{12}}} \]
Let $\zeta=\exp(2\pi I/s_{01})$. Then 
\[ \frac{g_0}{h_0}=\frac{f_1^{s_{12}}}{f_2^{s_{12}}}\zeta^{k_0}\]
for some $k_0$. Similarly we obtain
\[ \frac{g_1}{h_1}=\frac{f_1^{s_{02}}}{f_2^{s_{02}}}\zeta^{k_1}\]
for some $k_1$.

However $f_1/f_2$ is only determined up to a $s_{01}$-th root of unity. So we may multiply $f_1$ by an appropriate $s_{01}$-th root of unity in order to obtain $k_0=0$.
From this it follows that $(g_0,g_1,g_2)$ is equivalent to $(h_0,h_1\zeta^k,h_2)$ for some $k$. Hence the fibers of the map have at most $s$ elements. Since $s_{01}$ divides $p_1$ we obtain that  for each $k$
$(h_0,h_1\zeta^k,h_2)$ is a quasi-toric relation for $(F_0,F_1,F_2)$ of type $(p_0,p_1,p_2)$. Moreover, 
$(h_0,h_1\zeta^k,h_2)$ and  $(h_0,h_1\zeta^m,h_2)$ are equivalent if and only if $k\equiv m\bmod s_{01}$. Hence every nonempty fiber consists of precisely $s_{01}$ elements.

Suppose now that $s_{01}=1$ then obviously the map is injective. Suppose we have a point $(g_0/g_1,g_2/g_3)$ of $\cC$. Then there exists polynomials $f_1,f_2,u$ such that $f_1g_1=u^{s_{12}}$ and $f_2g_3=u^{s_{02}}$. In particular,
\[ \left(\frac{g_0}{g_1},\frac{g_2}{g_2}\right)=\left( \frac{f_1g_0}{u^{s_{12}}},\frac{f_2g_2}{u^{s_{02}}}\right).\] is the point associated with the quasi-toric relation $(f_1g_0,f_2g_2,u)$. Hence the map is surjective.
\end{proof}

\begin{remark} In  \cite[Introduction]{TorDec} we gave a map from the quasi-toric relations to the homology of a threefold  $X$, under the assumptions that  $F_0=F_1=1$ and $p_2=\lcm(p_0,p_1)$. The above Proposition allows us to extend this construction to arbitrary reduced types $(p_0,p_1,p_2)$ and forms $(F_0,F_1,F_2)$.

In order to have quasi-toric relation we need that for $\{i,j,k\}=\{0,1,2\}$ that  $\deg(F_i)-\deg(F_j)$ is divisible by $rs_{ij}=p_i/s_{ik}$. Suppose this is the case.

Let $X\subset \Ps(w_1,w_2,1,1,1)$ be the hypersurface defined by 
\[ F_0(z_0,z_1,z_2)x^{p_0/s_{01}}+F_1(z_0,z_1,z_2)y^{p_1/s_{01}}+F_2=0\]
where $w_0=\frac{s_{01}(\deg(F_2)-\deg(F_0))}{p_0}$ and $w_1=\frac{s(\deg(F_2)-\deg(F_1))}{p_1}$.
Similarly as in the case where $s_{01}=1, F_0=F_1=1$ (cf. \cite[Introduction]{TorDec} we have maps 
\[ \left\{\begin{array}{c}\mbox{Quasi-toric relations}\\\mbox{of type }(p_0,p_1,p_2)\end{array}\right\} \to \cC(\C(x_1,x_2))\to \Jac(\cC)(\C(x_1,x_2))\to H_4(X,\Z)_{\prim}\]
However, if $r>1$ then the first map is not bijective. This map may fail to be surjective and is definitely not injective, but has finite fibers. The second map is injective if $\cC$ has genus at least 1. The third map is  injective.
\end{remark}

\begin{remark}
In the case $r=1$ the above curve $\cC$ is isomorphic with
\[ z_1^{p_0/s_{01}}+ z_2^{p_1/s_{01}}+F_0^aF_1^bF_2=0\]
with $a\equiv -1 \bmod (p_0/s_{01})$, $a\equiv 0 \bmod p_1/s_{01}$, $b\equiv -1\bmod p_0/s_{01}$, $b\equiv 0 \bmod p_1/s_{01}$. (Such integers exists because $\gcd(p_0/s_{01},p_1/s_{01})=r=1$.)

In this case we can use Thom-Sebastiani to relate the dimension of $H_4(X,\Z)$ with the zeroes of the Alexander polynomial of $F_0^aF_1^bF_2$. In particular, the existence of quasi-toric structures force the Alexander polynomial to be non-constant. Similarly if $r>1$ and $F_0=F_1=1$ then we can use Thom-Sebastiani to relate the existence of quasi-toric structures to zeros of the Alexander polynomial of the curve $F_2=0$.

If $r>1$ and at least one of $F_0, F_1$ is non-constant then we cannot separate the variables and therefore not apply Thom-Sebastiani. We are not aware of a proof relation quasi-toric structures with the Alexander polynomial in this case.
\end{remark}

%$\cC(\C(x_1,x_2))\to H_4(X,\Z)_{\prim}$ as explained in \cite[Introduction]{TorDec}.

%In \cite{} we restricted ourselves to the case where $\gamma=1$. Under this additional assumption we have that each $\C(x_1,x_2)$-point yields an equivalence class of quasi-toric relations. 

%\end{proof}

\begin{proposition}\label{prpInf} 
Suppose $(p_0,p_1,p_2)$ is a reduced type, such that $p_0\leq p_1\leq p_2$. Then there exists a positive dimensional family of quasi-toric relations of $(p_0,p_1,p_2)$ parametrized by a rational variety if and only if one of the following occurs
\begin{enumerate}
\item $p_0=p_1=p_2=2$ and there is at least one quasi-toric relation or
\item $p_0=1$ and $p_1$ divides $\deg(F_1)-\deg(F_2)$.
\end{enumerate}
\end{proposition}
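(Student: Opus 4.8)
The plan is to transport the statement, via Proposition~\ref{prpCur}, to the curve $\cC$ over $K:=\C(x_1,x_2)$, and to show that a positive dimensional rational family of quasi-toric relations exists precisely when $\cC$ is a rational curve over $K$ carrying a $K$-rational point. First I would record that for a reduced type all the integers $t_i$ of Notation~\ref{notExp} equal $1$: since $\gcd(p_0/r,p_1/r,p_2/r)=1$, for each prime $\ell$ some $p_i/r$ is prime to $\ell$, and then the reducedness conditions $p_i/r\mid p_jp_k/r^2$ force the remaining two of $p_0/r,p_1/r,p_2/r$ to have the same $\ell$-adic valuation; hence $t_i=1$ for all $i$. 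Consequently $\cC$ has the shape $Az_1^{a}+Bz_2^{b}+1=0$ with $A=F_0/F_2$, $B=F_1/F_2\in K^\times$, $a=p_0/s_{01}=rs_{02}$, $b=p_1/s_{01}=rs_{12}$ and $\gcd(a,b)=r$. Over $\overline K$ one rescales $z_1,z_2$ to reach $w_1^{a}+w_2^{b}+1=0$, the base change to $\overline K$ of the corresponding affine plane curve over $\C$; viewing the latter as a degree-$b$ cover of the $w_1$-line, branched over the $a$ simple zeros of $1+w_1^{a}$ with $r$ points of ramification index $b/r$ above infinity, Riemann--Hurwitz gives $g(\cC)=\tfrac12\bigl((a-1)(b-1)-r+1\bigr)$.

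Next I would solve $g(\cC)=0$. The equation $(a-1)(b-1)=r-1$ with $1\le r=\gcd(a,b)\le\min(a,b)$ has only the solutions $a=1$, $b=1$, and $(a,b)=(2,2)$ (forcing $r=2$). Unwinding $a=rs_{02}$, $b=rs_{12}$ with $s_{02},s_{12}$ coprime, and using the ordering $p_0\le p_1\le p_2$, the first two collapse to $p_0=1$ (so the reduced type is $(1,p_1,p_1)$) and the third to $(p_0,p_1,p_2)=(2,2,2)$; conversely these cases give $g(\cC)=0$. This reduces the proposition to showing that a positive dimensional rational family of quasi-toric relations exists iff $g(\cC)=0$ and a quasi-toric relation exists, with the proviso that for $p_0=1$ the existence of a relation of type $(1,p_1,p_1)$ amounts, by matching the degrees of the three terms of $F_0h_0+F_1h_1^{p_1}+F_2h_2^{p_1}=0$, to $p_1\mid\deg F_1-\deg F_2$.

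For the ``if'' direction: when $p_0=1$ and $p_1\mid\deg F_1-\deg F_2$, I would fix admissible degrees and substitute $h_1=F_0\widetilde h_1$, $h_2=F_0\widetilde h_2$, so that $F_0$ divides $F_1h_1^{p_1}+F_2h_2^{p_1}$ and $h_0$ is then forced; letting $\widetilde h_1,\widetilde h_2$ range over projective spaces of forms of the appropriate degrees produces a rational, positive dimensional family of quasi-toric relations whose general members are pairwise inequivalent (in Definition~\ref{defEqu} the equivalence is detected by $h_1/h_2$ in this type). When $(p_0,p_1,p_2)=(2,2,2)$, $\cC$ is a conic over $K$; if a quasi-toric relation exists then $\cC(K)\ne\emptyset$, so $\cC\cong\Ps^1_K$, and parametrizing $\cC$ rationally while letting the parameter run through $\Ps^1(\C)\subset\Ps^1(K)$ gives, via the bijection of Proposition~\ref{prpCur} (valid since $s_{01}=1$), a family of quasi-toric relations of bounded degree over $\Ps^1_{\C}$ with pairwise distinct members.

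The ``only if'' direction is the hard part. Assuming $g(\cC)\ge1$, a positive dimensional rational family of quasi-toric relations parametrized by a rational variety $V$ would yield, by Proposition~\ref{prpCur} (whose fibers are finite), a non-constant map from $V$ to $\cC(K)$, i.e.\ a point $P$ of the smooth projective model of $\cC$ over $L:=\C(V)(x_1,x_2)$ that is not defined over $K$. The obstacle is that $\cC$ is \emph{isotrivial}: by the rescaling above, $\cC_{\overline K}$ is the constant curve with fibre the genus-$g(\cC)$ curve $C:\ w_1^{a}+w_2^{b}+1=0$ over $\C$, so one cannot simply invoke the Mordell conjecture over function fields. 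To get around this I would spread $P$ out: adjoining to $K$ the ($v$-independent) radicals $A^{1/a},B^{1/b}$ yields a surface $Y$ finite over $\As^2$ over which $\cC$ becomes constant, so that $P$ becomes a rational map $V\times Y\dashrightarrow C$; restricting to a general slice $V\times\{y\}$ produces a rational map from the rational variety $V$ to the curve $C$ of genus $\ge1$, which must be constant (for instance because $\mathrm{Alb}(V)=0$). Hence $P$ does not depend on the parameter and therefore lies in $\cC(K)$, a contradiction. Combining the four steps yields the proposition.
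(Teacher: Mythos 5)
Your proposal is correct and follows the same overall strategy as the paper's proof: translate the problem via Proposition~\ref{prpCur} into the existence of a non-constant rational family of $\C(x_1,x_2)$-points on $\cC$, deduce that $\cC$ must have genus $0$, classify the reduced types for which this happens, and exhibit the families in the two resulting cases by the same constructions (parametrizing the conic for $(2,2,2)$, and forcing $h_0$ after clearing $F_0$ for $p_0=1$). The only differences are in execution: you compute the genus exactly by Riemann--Hurwitz and solve $(a-1)(b-1)=r-1$, where the paper only bounds the genus from below via the degree-$(p_0/s_{01})$ projection; and in the ``only if'' step you argue at the generic point, trivializing the isotrivial curve over a finite cover and using that a rational variety admits no non-constant map to a positive-genus curve, whereas the paper specializes to a smooth closed fiber of $X\to\As^2$ receiving a non-constant map from a rational curve --- both are valid, and your version usefully makes explicit why function-field Mordell cannot be invoked directly.
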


\begin{proof}
If $p_0=p_1=p_2=2$ and there is at least one $(2,2,2)$ quasi-toric relation then we can use a parametrization of the conic
\[ F_0X_0^2+F_1X_1^2+F_2X_2^2=0\]
to obtain infinitely many quasi-toric relations.

Suppose now that $p_0=1$. Since our types are reduced it follows that $p_1=p_2=p$. 
Let $g_2$ be a form of degree $N$. Let $g_1$ be a form of degree $N+\frac{\deg(F_1)-\deg(F_2)}{p}$ then
\[ (F_0^{p-1}(F_1g_1^{p}-F_2g_2^{p}),  F_0g_1,F_0g_2)\]
is a quasi-toric relation. 
This finishes the ``if" part of the proof.

For the other direction, suppose we have a family of $\C(x_1,x_2)$-rational points on the curve $\cC$ parametrized by a Zariski open of $\Ps^1_\C$. The equation of this curve yields an affine threefold $X$ and a morphism $\pi:X\to \As^2$. Each point in $\cC(\C(x_1,x_2)$ yields a rational section to $\pi$. If we have a family of rational sections thenon most fibers their image is distinct, and hence there is some smooth fiber of $\pi$ receiving  a map from some affine rational curve. In particular the compacification of a general fiber has genus 0.

%then we a family of points on the curve parametrised by $\Ps^1_\C$. 
This means that the curve $\cC$ over $\C(x_1,x_2)$ has genus 0. The projective closure of curve $\cC$ has a natural map to $\Ps^1$ of degree $p_0/s_{01}$, with at least $p_1/s_{01} $ ramification points of index $p_0/s_{01} $. To have genus 0 we need either $p_0/s_{01}=1$ or $p_1/s_{01}\leq 2$.

Recall that $p_0=rs_{01}s_{02}$. Hence in the first case we have that $r=s_{02}=1$. Using $p_0\leq p_1\leq p_2$ we obtain that $s_{01}\leq s_{02}$, hence also $s_{01}=1$  holds and therefore $p_0=1$.

If $p_1=s_{01}$ then using $p_0\leq p_1$ we obtain that $p_0/s_{01}=1$ and we are in the first case.

If $p_1=2s_{01}$ then we have two possibilities. First if $r=2$ and $s_{12}=1$ then by the above inequalities we find $s_{01}=s_{02}=1$. In particular $(p_0,p_1,p_2)=(2,2,2)$.
If $r=1$ and $s_{12}=2$ then $s_{01},s_{02}\leq s_{12}$ and $\gcd(s_{01}s_{02},s_{12})=1$. Hence $s_{01}=s_{02}=1$. In this case we have exponents $(1,2,2)$.

Recall that if $p_0=1$ then we have $r=s_{01}=s_{02}=1$. Hence $p_1=p_2=s_{12}=:p$.
In order to obtain that the three summands
\[ F_0 h_0, F_1h_1^p,F_2h_2^p\]
are of the same degree we need that $p$ divides $\deg(F_1)-\deg(F_2)$.
\end{proof}

\begin{remark}\label{remCom}
In \cite{TT} constructed families with $(2,2,2n+1)$ quasi-toric relation. The above result shows that there is a different way to obtain $(2,2,2n+1)$ relations. Starting with a $(2,2,1)$ 
relation  $(g_0,g_1,g_2)$ we obtain $(g_2^ng_0, g_2^ng_1,g_2)$ as a quasi-toric relation. 

The point is that the author manages to choose $F_0,F_1,F_2$ such that the $g_2$ is a perfect $2n+1$ power, in other words they manages to find a $(2,2,2n+1)$ relation such that $\gcd(g_0,g_2)=\gcd(g_1,g_2)=1$.
\end{remark}
\bibliographystyle{plain}
\bibliography{remke2}
\end{document}